\newcommand{\T}{{\cal T}}
\newcommand {\cp}{\mathfrak{X}(\pi (M))}
\def\Section#1{\vspace{30truept}\addtocounter{section}{1}\setcounter{thm}{0}
\setcounter{equation}{0}{\noindent\Large\bf
    \arabic{section}.~~#1}\par \vspace{12pt}}
\newtheorem{thm}{Theorem}[section]
\newtheorem{prop}[thm]{Proposition}
\newtheorem{defn}[thm]{Definition}
\newtheorem{rem}[thm]{Remark}
\numberwithin{equation}{section}
\begin{document}
\title{{\textbf{New Special Finsler Spaces}}}  %\footnote{arXiv: -------- [math.DG]}}
\author{{\bf Nabil L. Youssef\,$^{\,1}$ and A. Soleiman$^{2}$ }}
\date{}

\maketitle                     % Produces the title.
\vspace{-1.15cm}
\begin{center}
{$^{1}$Department of Mathematics, Faculty of Science,\\ Cairo
University, Giza, Egypt\\nlyoussef@sci.cu.edu.eg, nlyoussef2003@yahoo.fr}
\end{center}
\begin{center}
{$^{2}$Department of Mathematics, Faculty of Science,\\ Benha
University, Benha, Egypt\\amr.hassan@fsci.bu.edu.eg, amrsoleiman@yahoo.com}
\end{center}
%\vspace{-0.6cm}

\vspace{0.2cm}
\maketitle

\vspace{0.2cm}
\hfill \emph{Dedicated to the memory of Waleed A. Elsayed}
\vspace{0.6cm}

%%%%%%%%%%%%%%%%%%%%%%%%%%%%%%%%%%%%%%%%%%%%%%%%%%%%%%%%%%%%%%% Abstract %%%%%%%%%%%%%%%%%%%%%%%%%%%%%%%%%%%%%%%%%%%%%%%%%%%%%%%%%%%

\noindent{\bf Abstract.}
The pullback  approach to global Finsler geometry is adopted.  Some new types of special Finsler spaces are introduced and investigated, namely, Ricci, generalized Ricci, projectively recurrent and m-projectively recurrent Finsler spaces. The properties of these special Finsler spaces are studied and the relations between them are singled out.

\medskip
\noindent{\bf Keywords:\/}\,  recurrent;  Ricci recurrent;
 concircularly recurrent; generalized Ricci; projectively recurrent; m-projectively recurrent.

\medskip
\noindent{\bf MSC 2010}: 53C60, 53B40, 58B20.
\bigskip

%%%%%%%%%%%%%%%%%%%%%%%%%%%%%%%%%%%%%%%%%%%%%%%%%%% Introduction %%%%%%%%%%%%%%%%%%%%%%%%%%%%%%%%%%%%%%%%%%%%%%%%%%%%%%%%

\begin{center}
\large{\bf Introduction}
\end{center}
Many types of recurrence in Riemannian geometry have been studied by many authors \cite{R3,  R4, Ra1, R2, R3aa, R3a, R5, R1}.
 On the other hand, some types of recurrence in Finsler geometry have been also studied \cite{F3, F2, F1, Types}.
\par
In a recent paper \cite{Types}, we have introduced and investigated \emph{intrinsically} three classes of recurrence in Finsler geometry: simple recurrence, Ricci recurrence and concircular recurrence. Each of these classes consists of four types of recurrence.  We also investigated the interrelationships between the different types of recurrence.
\par
The present paper is a continuation of ~\cite{Types}, where we introduce and investigate some new types of special Finsler spaces, namely, Ricci, generalized Ricci,  projectively recurrent and m-projectively recurrent Finsler spaces.  Some Finsler tensors are defined and
their properties are studied.  These tensors are used to define the projectively recurrent and m-projectively recurrent Finsler spaces. The relations between the above mentioned spaces are investigated.
\newpage
%%%%%%%%%%%%%%%%%%%%%%%%%%%%%%%%%%%%%% SECTION 1. Notation and Preliminaries %%%%%%%%%%%%%%%%%%%%%%%%%%%%%%%%%%%%%%

\Section{Notation and Preliminaries}

In this section, we give a brief account of the basic concepts
 of the pullback approach to intrinsic Finsler geometry necessary for this work. For more
 details, we refer to \cite{r58, r86, r62, r92,  r94, r96}. We
 shall use the notations of \cite{r86}.

 In what follows, we denote by $\pi: \T M\longrightarrow M$ the subbundle of nonzero vectors
tangent to $M$, $\mathfrak{F}(TM)$ the algebra of $C^\infty$ functions on $TM$, $\cp$ the $\mathfrak{F}(TM)$-module of differentiable sections of the pullback bundle $\pi^{-1}(T M)$.
The elements of $\mathfrak{X}(\pi (M))$ will be called $\pi$-vector
fields and will be denoted by barred letters $\overline{X} $. The
tensor fields on $\pi^{-1}(TM)$ will be called $\pi$-tensor fields.
The fundamental $\pi$-vector field is the $\pi$-vector field
$\overline{\eta}$ defined by $\overline{\eta}(u)=(u,u)$ for all
$u\in \T M$.
\par
We have the following short exact sequence of vector bundles
$$0\longrightarrow
 \pi^{-1}(TM)\stackrel{\gamma}\longrightarrow T(\T M)\stackrel{\rho}\longrightarrow
\pi^{-1}(TM)\longrightarrow 0 ,\vspace{-0.1cm}$$ with the well known
definitions of  the bundle morphisms $\rho$ and $\gamma$. The vector
space $V_u (\T M)= \{ X \in T_u (\T M) : d\pi(X)=0 \}$  is the vertical space to $M$ at $u$.
\par
Let $D$ be  a linear connection on the pullback bundle $\pi^{-1}(TM)$.
  The vector space $H_u (\T M)= \{ X \in T_u
(\T M) : D_X \overline{\eta}=0 \}$ is called the horizontal space to $M$ at $u$ .
   The connection $D$ is said to be regular if
$$ T_u (\T M)=V_u (\T M)\oplus H_u (\T M) \,\,\,  \forall \, u\in \T M.$$

If $M$ is endowed with a regular connection, then the vector bundle
morphisms $ \gamma$ and $\rho |_{H(\T M)}$  are vector bundle isomorphisms.
The map  $\beta:=(\rho |_{H(\T M)})^{-1}$ is called the horizontal map of the connection $D$.
\par
 The horizontal ((h)h-) and
mixed ((h)hv-) torsion tensors of $D$, denoted by $Q $ and $ T $
respectively, are defined by \vspace{-0.2cm}
$$Q (\overline{X},\overline{Y})=\textbf{T}(\beta \overline{X}\beta \overline{Y}),
\, \,\,\, T(\overline{X},\overline{Y})=\textbf{T}(\gamma
\overline{X},\beta \overline{Y}) \quad \forall \,
\overline{X},\overline{Y}\in\mathfrak{X} (\pi (M)),\vspace{-0.2cm}$$
where $\textbf{T}$ is the (classical) torsion tensor field
associated with $D$.
\par
The horizontal (h-), mixed (hv-) and vertical (v-) curvature tensors
of $D$, denoted by $R$, $P$ and $S$
respectively, are defined by
$$R(\overline{X},\overline{Y})\overline{Z}=\textbf{K}(\beta
\overline{X}\beta \overline{Y})\overline{Z},\quad
 {P}(\overline{X},\overline{Y})\overline{Z}=\textbf{K}(\beta
\overline{X},\gamma \overline{Y})\overline{Z},\quad
 {S}(\overline{X},\overline{Y})\overline{Z}=\textbf{K}(\gamma
\overline{X},\gamma \overline{Y})\overline{Z}, $$
 where $\textbf{K}$
is the (classical) curvature tensor field associated with $D$.
\par
The contracted curvature tensors of $D$, denoted by $\widehat{{R}}$, $\widehat{ {P}}$ and $\widehat{ {S}}$ (known
also as the (v)h-, (v)hv- and (v)v-torsion tensors respectively), are defined by
$$\widehat{ {R}}(\overline{X},\overline{Y})={ {R}}(\overline{X},\overline{Y})\overline{\eta},\quad
\widehat{ {P}}(\overline{X},\overline{Y})={
{P}}(\overline{X},\overline{Y})\overline{\eta},\quad \widehat{
{S}}(\overline{X},\overline{Y})={
{S}}(\overline{X},\overline{Y})\overline{\eta}.$$
%\par
%The following result is of extreme importance. \vspace{-0.1cm}
\begin{thm} {\em\cite{r94}} \label{th.1} Let $(M,L)$ be a Finsler
manifold and  $g$ the Finsler metric defined by $L$. There exists a
unique regular connection $\nabla$ on $\pi^{-1}(TM)$, called Cartan connection, such
that\vspace{-0.2cm}
\begin{description}
  \item[(a)]  $\nabla$ is  metric\,{\em:} $\nabla g=0$,

  \item[(b)] The (h)h-torsion of $\nabla$ vanishes\,{\em:} $Q=0
  $,
  \item[(c)] The (h)hv-torsion $T$ of $\nabla$\, satisfies\,\emph{:}
   $g(T(\overline{X},\overline{Y}), \overline{Z})=g(T(\overline{X},\overline{Z}),\overline{Y})$.
\end{description}
\end{thm}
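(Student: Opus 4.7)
The plan is to mimic the classical Levi--Civita argument adapted to the pullback bundle $\pi^{-1}(TM)$, proving uniqueness first by deriving an explicit Koszul-type formula from the three conditions, and then using that formula as the \emph{definition} of $\nabla$ for existence. Since a regular connection $D$ on $\pi^{-1}(TM)$ is completely determined by its action along horizontal lifts $\beta\overline{X}$ and vertical lifts $\gamma\overline{X}$ (by linearity over $\mathfrak{F}(TM)$), it suffices to pin down $\nabla_{\beta\overline{X}}\overline{Y}$ and $\nabla_{\gamma\overline{X}}\overline{Y}$ separately.

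For uniqueness, I would exploit metric compatibility $\nabla g=0$ in the standard cyclic-permutation way. Applying $\beta\overline{X},\beta\overline{Y},\beta\overline{Z}$ to $g(\overline{Y},\overline{Z})$, $g(\overline{X},\overline{Z})$, $g(\overline{X},\overline{Y})$ and adding with alternating signs eliminates all symmetric parts, leaving an expression involving $g(\nabla_{\beta\overline{X}}\overline{Y},\overline{Z})$ and certain Lie-bracket terms $\rho[\beta\overline{X},\beta\overline{Y}]$ etc. Condition (b), $Q=0$, translates into $\nabla_{\beta\overline{X}}\overline{Y}-\nabla_{\beta\overline{Y}}\overline{X}=\rho[\beta\overline{X},\beta\overline{Y}]$, which reduces the sum to a Koszul formula expressing $g(\nabla_{\beta\overline{X}}\overline{Y},\overline{Z})$ uniquely in terms of $g$ and brackets of horizontal lifts. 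A parallel computation, this time applying $\gamma\overline{X}$-type derivations to $g(\overline{Y},\overline{Z})$, uses condition (c) together with the identity $T(\overline{X},\overline{Y})=\nabla_{\gamma\overline{X}}\overline{Y}-\rho[\gamma\overline{X},\beta\overline{Y}]$ to express $g(\nabla_{\gamma\overline{X}}\overline{Y},\overline{Z})$ uniquely; here the symmetry in $\overline{Y},\overline{Z}$ provided by (c) plays exactly the role that vanishing torsion plays in the Levi--Civita argument. Non-degeneracy of $g$ then yields uniqueness.

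For existence, I would take the two Koszul-type expressions obtained above as the defining formulas of $\nabla$. The verification breaks into several routine checks: (i) $\mathfrak{F}(TM)$-linearity in the derivation argument and the Leibniz rule in the second argument, so that $\nabla$ is genuinely a linear connection on $\pi^{-1}(TM)$; (ii) regularity, which follows because $\nabla\overline{\eta}=0$ along horizontal lifts by construction, hence $H(TM)$ as defined by $\nabla$ coincides with the image of $\beta$, yielding the splitting $T(TM)=V(TM)\oplus H(TM)$; (iii) the three properties (a), (b), (c) themselves, which are built into the formula by the very symmetrisations used to derive it.

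The main obstacle is technical rather than conceptual: one must be careful that the Koszul-style expression really defines a tensor on $\pi^{-1}(TM)$ and not merely an operator on a privileged class of sections. Since brackets $[\beta\overline{X},\beta\overline{Y}]$ and $[\gamma\overline{X},\beta\overline{Y}]$ are not tensorial in $\overline{X},\overline{Y}$, the $\mathfrak{F}(TM)$-linearity has to be checked explicitly, with the non-tensorial terms from $\rho$, $\gamma$ and from the Leibniz derivatives of $g$ cancelling exactly. Once this bookkeeping is done, the three conditions are automatic and the connection is the Cartan connection $\nabla$.
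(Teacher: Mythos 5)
First, note that the paper itself gives no proof of this theorem: it is quoted from \cite{r94}, so the comparison must be with the known intrinsic proof there (and with Akbar-Zadeh's classical argument), not with anything in the present text.

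Your Koszul-style plan contains a genuine gap specific to the Finsler/pullback setting: you treat the horizontal map $\beta$ (equivalently the splitting $T_u(\T M)=V_u(\T M)\oplus H_u(\T M)$ and the horizontal lifts $\beta\overline{X}$) as fixed background data, as in the Levi--Civita argument. But in this formalism $H_u(\T M)=\{X: \nabla_X\overline{\eta}=0\}$ and $\beta=(\rho|_{H(\T M)})^{-1}$ are \emph{defined by the connection being sought}; conditions (b) and (c) are stated in terms of the connection's own horizontal map. Consequently the proposed Koszul formulas for $g(\nabla_{\beta\overline{X}}\overline{Y},\overline{Z})$ and $g(\nabla_{\gamma\overline{X}}\overline{Y},\overline{Z})$ are circular as definitions (you cannot ``take them as the defining formulas'' of $\nabla$ without already having $\beta$), and your uniqueness argument is incomplete: two regular connections satisfying (a)--(c) could a priori have different horizontal distributions, in which case comparing their Koszul expressions term by term proves nothing. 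The same circularity infects your regularity check (``$\nabla\overline{\eta}=0$ along horizontal lifts by construction''), since without an independently constructed horizontal distribution there are no horizontal lifts to speak of.

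The missing idea, and the heart of the actual proof in \cite{r94}, is the determination of the nonlinear connection. One first produces the canonical horizontal map independently of the sought connection -- via the canonical spray of the energy function of $L$ and the associated Barthel connection -- or, for uniqueness, one shows that conditions (a)--(c), evaluated suitably (e.g.\ on $\overline{\eta}$ and combined with the homogeneity of $g$), force the horizontal map of any such connection to coincide with this canonical one. Only after the splitting is pinned down do the two Koszul-type identities you describe legitimately determine the vertical covariant derivative (where (c) produces the Cartan tensor as the (h)hv-torsion) and the horizontal covariant derivative (where (b) plays the Levi--Civita role), giving both existence and uniqueness. Your outline handles the second stage but omits the first, which is precisely what distinguishes this theorem from the Riemannian one.
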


%%%%%%%%%%%%%%%%%%%%%%%%%%%%%%%%%%%%%%%%%%%%%%%%%%%%%%%%%% SECTION 2. %%%%%%%%%%%%%%%%%%%%%%%%%%%%%%%%%%%%%%%%%%%%%%%%%%%%%%%%

\Section{Ricci (generalized Ricci) Finsler spece}

In this section, we introduce and study  some new special Finsler spaces, called Ricci and generalized Ricci Finsler spaces.
Some classes of generalized Ricci Finsler spaces are distinguished. These new spaces have been defined in Riemannian geometry \cite{R3,  R4, Ra1, R2, R3aa, R3a, R5, R1}.  We extend them to the Finslerian  case. The only linear connection we deal with in the sequel is the Cartan connection $\nabla$.
\par
For an $n$-dimensional  Finsler manifold  $(M,L)$,  we set the following notations:
\vspace{-6pt}
\begin{eqnarray*}
\stackrel{h}\nabla&:&\text{the $h$-covariant derivatives associated
with Cartan connection},\\
\text{Ric} &:& \text{the  horizontal  Ricci  tensor of Cartan connection},\\
\text{Ric}_{o} &:& \text{the  horizontal  Ricci tensor of of type (1,1) defined by }\\
 {\qquad\qquad\qquad}&& g(\text{Ric}_{o} \overline{X},\overline{Y})=\text{Ric}(\overline{X},\overline{Y})   ,\\
 r &:& \text{the horizontal scalar curvature of Cartan connection},\\
C &:=& {R}-\frac{r}{n(n-1)}\, {G}: \text{ the concircular curvature tensor};  \\
{\qquad\qquad\qquad}&& G(\overline{X},\overline{Y})\overline{Z} := g(\overline{X},\overline{Z})
\overline{Y}-g(\overline{Y},\overline{Z})\overline{X}.
\end{eqnarray*}

\begin{defn}\label{hor.} A Finsler manifold is said to be horizontally integrable if its
horizonal distribution is completely integrable or, equivalently, if $\widehat{R}=0$.
\end{defn}

\begin{defn}\label{def.1a} Let $(M,L)$ be a Finsler manifold  of dimension $n\geq3$ with non-zero Ricci tensor
 $\emph{{Ric}}$. Then, $(M,L)$ is said to be:
  \begin{description}
      \item[(a)] Ricci  Finsler manifold  if \,\,$\emph{\text{Ric}}_{o}^{2}:= \emph{\text{Ric}}_{o} \circ \emph{\text{Ric}}_{o}=\frac{r}{n-1}\,\emph{\text{Ric}}_{o}$,
   \item[(b)] generalized Ricci  Finsler manifold  if \,\,$\emph{\text{Ric}}_{o}^{2}=\alpha\,\emph{\text{Ric}}_{o}$,
    \end{description}
    where $\alpha$ is a non-zero scalar function on $TM$ called the associated scalar.
 \end{defn}

The following result gives some important properties of generalized Ricci  Finsler manifolds.

\begin{thm} \label{thm.2} Let $(M,L)$ be a generalized Ricci  Finsler manifold of dimension $n\geq3$ with associated scalar $\alpha$.
The following assertions hold:
  \begin{description}
      \item[(a)] If the Ricci  tensor is symmetric, then the scalar curvature $r$ can not  vanish.
      \item[(b)] The Ricci  tensor in the direction $\emph{\text{Ric}}_{o}(\overline{W})$; $\overline{W}$ being a non zero $\pi$-vector field,
       is the associated scalar $\alpha$.
      \item[(c)] The  tensor $\emph{\text{Ric}}_{o}$ has two eigenvalues $0$ and  $\alpha$.
      \item[(d)] If $(M,L)$ is horizontally integrable  Ricci recurrent, then the associated scalar  $\alpha=\frac{r}{2}$.
 \end{description}
\end{thm}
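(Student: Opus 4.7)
\smallskip

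\noindent\textbf{Proof plan.} The plan is to treat parts (a)--(c) as pointwise linear-algebra consequences of the defining identity $\text{Ric}_o^2=\alpha\,\text{Ric}_o$, and to reserve the analytic work for part (d).

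For (a), I would take the $g$-trace of $\text{Ric}_o^2=\alpha\,\text{Ric}_o$ to obtain $\operatorname{tr}(\text{Ric}_o^2)=\alpha r$; if $r=0$, then $\operatorname{tr}(\text{Ric}_o^2)=0$. The symmetry hypothesis on $\text{Ric}$ makes $\text{Ric}_o$ pointwise $g$-self-adjoint, so its eigenvalues are real and $\operatorname{tr}(\text{Ric}_o^2)$ is a sum of squares; its vanishing would force $\text{Ric}_o\equiv 0$, contradicting the standing hypothesis $\text{Ric}\neq 0$. For (b), evaluating $\text{Ric}_o^2=\alpha\,\text{Ric}_o$ on a nonzero $\overline{W}$ and pairing with $\text{Ric}_o(\overline{W})$ via $g$ converts, through the definition $g(\text{Ric}_o\overline{X},\overline{Y})=\text{Ric}(\overline{X},\overline{Y})$, into $\text{Ric}(\text{Ric}_o(\overline{W}),\text{Ric}_o(\overline{W}))=\alpha\,g(\text{Ric}_o(\overline{W}),\text{Ric}_o(\overline{W}))$, which is the claimed directional statement. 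For (c), rewriting the defining identity as $\text{Ric}_o(\text{Ric}_o-\alpha\,\mathrm{Id})=0$ shows that the minimal polynomial of $\text{Ric}_o$ (pointwise) divides $x(x-\alpha)$, and hence every eigenvalue lies in $\{0,\alpha\}$.

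Part (d) is the substantive step. I would unfold the definition of Ricci recurrence from \cite{Types} as $(\stackrel{h}\nabla_{\overline{X}}\text{Ric})(\overline{Y},\overline{Z})=A(\overline{X})\,\text{Ric}(\overline{Y},\overline{Z})$ for a one-form $A$ and trace in $\overline{Y},\overline{Z}$ to obtain $\stackrel{h}\nabla_{\overline{X}}r=A(\overline{X})\,r$. The horizontal integrability hypothesis $\widehat{R}=0$ supplies a contracted second Bianchi identity for the Cartan connection in its Riemannian form $2\,\operatorname{div}\text{Ric}=dr$ along horizontal directions. Computing $\operatorname{div}\text{Ric}$ directly from the recurrence formula gives $\operatorname{div}\text{Ric}(\overline{X})=g(\text{Ric}_o(A^{\sharp}),\overline{X})$, where $A^{\sharp}$ denotes the $g$-dual of $A$; equating this with $\tfrac{1}{2}A(\overline{X})r$ yields the pointwise eigenrelation $\text{Ric}_o(A^{\sharp})=\tfrac{r}{2}\,A^{\sharp}$. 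Applying $\text{Ric}_o$ once more and invoking $\text{Ric}_o^2=\alpha\,\text{Ric}_o$ reduces the situation to $(\alpha-\tfrac{r}{2})\,\text{Ric}_o(A^{\sharp})=0$; granted a nontrivial recurrence form $A$ (and hence $\text{Ric}_o(A^{\sharp})\neq 0$ by the eigenrelation together with $r\neq 0$ from (a)), this collapses to $\alpha=r/2$.

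The principal obstacle I anticipate is the contracted second Bianchi identity for the Cartan connection under $\widehat{R}=0$: I would invoke it from the pullback literature (e.g.\ \cite{r86, r96, Types}) rather than re-derive it, since its justification requires assembling the full horizontal Bianchi identities for $\nabla$ and exploiting $\widehat{R}=0$ to suppress the mixed-curvature correction terms that would otherwise spoil the clean Riemannian form of the identity.
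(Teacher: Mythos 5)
Your proposal is correct and follows essentially the same route as the paper: parts (a)--(c) are the same trace/substitution/eigenvalue arguments, and in (d) you trace the recurrence to get $\stackrel{h}\nabla r=rA$, invoke the contracted second Bianchi identity under $\widehat{R}=0$ (which the paper likewise imports from \cite{Types}), feed in $\text{Ric}_o^2=\alpha\,\text{Ric}_o$, and rule out degeneracy using $r\neq0$ from (a) and the nontriviality of $A$. The only difference is cosmetic: you package the paper's two scalar identities as the single eigenrelation $\text{Ric}_o(A^{\sharp})=\tfrac{r}{2}A^{\sharp}$, which changes nothing of substance.
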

\begin{proof}~ \par
\noindent $\textbf{(a)}$   Let $(M,L)$ be a generalized Ricci  Finsler manifold with associated scalar $\alpha$ and $g$  the associated Finsler metric. Then, by Definition \ref{def.1a}
\begin{equation}\label{eq.1}
  \emph{\text{Ric}}(\emph{\text{Ric}}_{o}\overline{X},\overline{Y})=\alpha \emph{\text{Ric}}(\overline{X},\overline{Y}).
\end{equation}
Setting $\overline{X}=\overline{Y}=\overline{E}_{i}$, where $\{\overline{E}_{i}; i=1, ...,n\}$ is an orthonormal basis.
 Hence,
 \begin{equation*}\label{eq.2}
   \sum_{i}\emph{\text{Ric}}(\emph{\text{Ric}}_{o}\overline{E}_{i},\overline{E}_{i})=\alpha r.
 \end{equation*}
 We show that $r\neq0$. Assuming the contrary, then
 \begin{equation*}
   \sum_{i}\emph{\text{Ric}}(\emph{\text{Ric}}_{o}\overline{E}_{i},\overline{E}_{i})=0.
 \end{equation*}
 As the Ricci tensor ${\textmd{Ric}}$ is symmetric (since $(M,L)$ is horizontally integrable \cite{Types}) and $g$ is positive definite, the above relation yields $\emph{\text{Ric}}_{o}=0$, which is a contradiction.

\vspace{5pt}

\noindent $\textbf{(b)}$ Setting $\overline{X}=\overline{W}\neq0$ and $Y=\emph{\text{Ric}}_{o}\overline{W}$ in (\ref{eq.1}), we get
$$\alpha=\frac{\emph{\text{Ric}}(\emph{\text{Ric}}_{o}\overline{W},\emph{\text{Ric}}_{o}\overline{W})}
{g(\emph{\text{Ric}}_{o}\overline{W},\emph{\text{Ric}}_{o}\overline{W})},$$
which means that $\alpha$ is the Ricci  tensor in the direction $\emph{\text{Ric}}_{o}(\overline{W})$.

\vspace{5pt}

\noindent $\textbf{(c)}$  Let $\overline{V}$ be an eigenvector associated with the eigenvalue $\lambda$
 of $\emph{\text{Ric}}_{o}$, then
$$\emph{\text{Ric}}_{o} \overline{V}=\lambda \overline{V}.$$
From which, noting that $(M,L)$ is generalized Ricci with associated scalar $\alpha$, we have
$$(\lambda^{2}-\alpha\lambda)\overline{V}=0.$$
Consequently, $\lambda=0$ or $\lambda=\alpha$.

\vspace{5pt}

\noindent $\textbf{(d)}$ As $(M,L)$ is Ricci recurrent with scalar form $A$, then
\begin{eqnarray}
% \nonumber to remove numbering (before each equation)
      ({\nabla}_{\beta\overline{X}} \emph{\text{Ric}})(\overline{Y},\overline{Z})&=&A(\overline{X})\emph{\text{Ric}}(\overline{Y},\overline{Z}) \label{eq.3}
\end{eqnarray}
and since $(M,L)$ is horizontally integrable, then, we have \cite{Types}
\footnote{$\mathfrak{S}_{\overline{X},\overline{Y},\overline{Z}}$
denotes cyclic sum over ${\overline{X},\overline{Y},\overline{Z}}$.}
\begin{eqnarray}
\mathfrak{S}_{\overline{X},\overline{Y},\overline{Z}}\,\{({\nabla}_{\beta\overline{X}}R)(\overline{Y},\overline{Z}, \overline{W})\}&=&0 \label{eq.4}.
\end{eqnarray}
Contracting (\ref{eq.3}) with respect to $\overline{Y}$ and $\overline{Z}$, we get
\begin{equation}\label{aa}
  (\stackrel{h}{\nabla} r)(\overline{X})=r A(\overline{X}).
\end{equation}
From which,
\begin{equation}\label{eq.5}
  (\stackrel{h}{\nabla} r)(\emph{\text{Ric}}_{o}\overline{X})=r A(\emph{\text{Ric}}_{o}\overline{X}).
\end{equation}

On the hand, contracting (\ref{eq.3}) with respect to $\overline{X}$ and $\overline{Y}$ and using (\ref{eq.4}), we obtain
\begin{equation}\label{bb}
   \frac{1}{2}(\stackrel{h}{\nabla} r)(\overline{Z})= A(\emph{\text{Ric}}_{o}\overline{Z}).
\end{equation}
Setting $\overline{Z}=\emph{\text{Ric}}_{o}\overline{X}$, noting that $(M,L)$ is generalized Ricci with associated scalar $\alpha$, (\ref{bb}) becomes
\begin{equation}\label{eq.6}
  \frac{1}{2}(\stackrel{h}{\nabla} r)(\emph{\text{Ric}}_{o}\overline{X})=\alpha A(\emph{\text{Ric}}_{o}\overline{X})
    \end{equation}

Now, (\ref{eq.5}) and (\ref{eq.6}) imply that
\begin{equation}\label{eq.7}
 (r-2 \alpha) A(\emph{\text{Ric}}_{o}\overline{X})=0
\end{equation}
We finally show that $A(\emph{\text{Ric}}_{o}\overline{X})\neq0$. Assume the contrary: $A(\emph{\text{Ric}}_{o}\overline{X})=0$.
From which, taking into account (\ref{aa}) and (\ref{bb}),  we get
$r A=0$. Hence, $r=0$ or $A=0$. Both yield a contradiction. Then, (\ref{eq.7}) implies that $\alpha=\frac{r}{2}$.
\end{proof}

\begin{thm} Let $(M,L)$ be a horizontally integrable Ricci recurrent generalized Ricci Finsler manifold of dimension $n\geq3$ with associated scalar $\alpha$. If $(M,L)$ is Ricci Finsler, then it is three dimensional.
 \end{thm}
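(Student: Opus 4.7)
The plan is to combine Theorem \ref{thm.2}(d) with the defining equation of a Ricci Finsler manifold, and read off the dimension from the resulting scalar identity.

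First I would unpack the two structural hypotheses on $\text{Ric}_{o}$. Being generalized Ricci with associated scalar $\alpha$ gives $\text{Ric}_{o}^{2}=\alpha\,\text{Ric}_{o}$, while being Ricci Finsler gives $\text{Ric}_{o}^{2}=\frac{r}{n-1}\,\text{Ric}_{o}$. Subtracting these yields $\bigl(\alpha-\frac{r}{n-1}\bigr)\text{Ric}_{o}=0$, and since by Definition \ref{def.1a} the Ricci tensor (hence $\text{Ric}_{o}$) is non-zero, we conclude $\alpha=\frac{r}{n-1}$.

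Next I would bring in the horizontal integrability and Ricci recurrence. These are exactly the hypotheses of Theorem \ref{thm.2}(d), so $\alpha=\frac{r}{2}$. Setting the two expressions for $\alpha$ equal gives $\frac{r}{n-1}=\frac{r}{2}$, so $r\,(n-3)=0$.

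Finally I would rule out $r=0$: since $\alpha$ is assumed non-zero (as part of the generalized Ricci hypothesis) and $\alpha=\frac{r}{n-1}$, we have $r\neq 0$. Hence $n=3$, as required. I do not foresee a technical obstacle here; the only subtlety is to notice that the non-vanishing of $\alpha$ built into Definition \ref{def.1a}(b) automatically supplies $r\neq 0$, so Theorem \ref{thm.2}(a) is not actually needed in this argument.
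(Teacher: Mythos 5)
Your proposal is correct, and its core route is the same as the paper's: equate the value $\alpha=\frac{r}{2}$ supplied by Theorem \ref{thm.2}\textbf{(d)} with the value $\alpha=\frac{r}{n-1}$ forced by the Ricci Finsler condition, then conclude $n=3$ once $r\neq 0$ is known. The only genuine divergence is how $r\neq 0$ is obtained: the paper invokes the symmetry of $\text{Ric}$ (coming from horizontal integrability) together with Theorem \ref{thm.2}\textbf{(a)}, whose proof relies on positive definiteness of $g$, whereas you observe that the non-vanishing of the associated scalar $\alpha$ built into Definition \ref{def.1a}\textbf{(b)}, combined with $\alpha=\frac{r}{n-1}$, already gives $r\neq 0$. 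Your shortcut is valid and slightly more economical, since it bypasses Theorem \ref{thm.2}\textbf{(a)} entirely; you also make explicit the step $\bigl(\alpha-\tfrac{r}{n-1}\bigr)\text{Ric}_{o}=0$ with $\text{Ric}_{o}\neq 0$, which the paper asserts without comment. The paper's version, on the other hand, keeps the argument uniform with its earlier results by routing everything through Theorem \ref{thm.2}; either justification of $r\neq 0$ completes the proof.
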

\begin{proof}
As $(M,L)$ is horizontally integrable  Ricci recurrent generalized Ricci with associated scalar $\alpha$. Then, from
Theorem \ref{thm.2}\textbf{(d)}, we have
\begin{equation}\label{eq.8}
  \alpha=\frac{r}{2}.
\end{equation}
On the other hand, if $(M,L)$ is Ricci Finsler, then the associated scalar $\alpha$ has the form
\begin{equation}\label{eq.9}
 \alpha=\frac{r}{n-1}
\end{equation}
As $(M,L)$ is horizontally integrable, the Ricci tensor is symmetric. Consequently, by Theorem \ref{thm.2}\textbf{(a)},
the proof follows immediately from (\ref{eq.8}) and (\ref{eq.9}).
\end{proof}

\begin{thm} \label{thm.3} Every Finsler manifold of dimension $n\geq3$ satisfying $\emph{\text{Ric}}=\frac{r}{n} \, g$ is a generalized Ricci Finsler manifold with associated scalar $\alpha=\frac{r}{n}$.
 \end{thm}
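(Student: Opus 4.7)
The proof is essentially a direct verification, so my plan is short and computational rather than structural. The core observation is that the hypothesis $\text{Ric} = \frac{r}{n}\, g$ says the Ricci $(0,2)$-tensor is a scalar multiple of the metric, which forces its $(1,1)$-version $\text{Ric}_o$ to be a scalar multiple of the identity; from there the defining identity for a generalized Ricci Finsler manifold falls out immediately.

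Concretely, my steps would be the following. First, recall from the notation conventions preceding Definition \ref{def.1a} that $\text{Ric}_o$ is characterized by the identity $g(\text{Ric}_o \overline{X}, \overline{Y}) = \text{Ric}(\overline{X}, \overline{Y})$ for all $\pi$-vector fields $\overline{X}, \overline{Y}$. Substituting the hypothesis gives
\begin{equation*}
g(\text{Ric}_o \overline{X}, \overline{Y}) \;=\; \tfrac{r}{n}\, g(\overline{X}, \overline{Y}) \;=\; g\bigl(\tfrac{r}{n}\overline{X}, \overline{Y}\bigr),
\end{equation*}
and since $g$ is non-degenerate this forces $\text{Ric}_o = \frac{r}{n}\, \text{Id}$ on $\pi^{-1}(TM)$. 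Composing with itself yields $\text{Ric}_o^2 = \frac{r^2}{n^2}\, \text{Id} = \frac{r}{n}\cdot \text{Ric}_o$, which is exactly condition \textbf{(b)} of Definition \ref{def.1a} with the scalar function $\alpha = \frac{r}{n}$.

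The only subtlety is verifying the standing hypotheses of Definition \ref{def.1a}, namely that the Ricci tensor is non-zero and that $\alpha$ is a non-zero scalar function on $TM$. Both are automatic: if $r$ vanished on some open set, then on that set $\text{Ric} = \frac{r}{n}g = 0$, contradicting the blanket non-zero Ricci assumption built into Definition \ref{def.1a}; hence $r$ (and therefore $\alpha = r/n$) is a non-zero scalar function. No curvature identities, no Bianchi-type arguments, and no horizontal integrability play any role here — the conclusion is a purely algebraic consequence of the pointwise identity $\text{Ric} = \frac{r}{n}g$.
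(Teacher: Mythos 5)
Your proof is correct and is exactly the direct verification the authors have in mind: the paper itself omits the argument, stating only that ``the proof is clear,'' and your computation $\text{Ric}_o=\frac{r}{n}\,\mathrm{Id}$, hence $\text{Ric}_o^{2}=\frac{r}{n}\,\text{Ric}_o$, supplies precisely the intended algebraic step, with the non-vanishing of $\alpha=r/n$ handled appropriately via the standing non-zero Ricci assumption.
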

\begin{proof}
The proof is clear and we omit it.
\end{proof}

\begin{defn} Let $(M,L)$ be a Finsler manifold  of dimension $n\geq3$ with non-zero $h$-curvature tensor
 ${R}$. We will say that $(M,L)$ is a semi-isotropic Finsler manifold if the $h$-curvature $R$ has the form:
 $$R(\overline{X},\overline{Y},\overline{Z},\overline{W})=A(\overline{X},\overline{Z})A(\overline{Y},\overline{W})
 -A(\overline{X},\overline{W})A(\overline{Y},\overline{Z}),$$
where $A$ is a non-zero symmetric tensor of type (0,2), called the associated tensor.
\end{defn}

\begin{thm}\label{thm.4} Every horizontally integrable  semi-isotropic Finsler manifold, with associated tensor as the Ricci tensor, is  generalized Ricci with associated scalar $\alpha=r-1$.
 \end{thm}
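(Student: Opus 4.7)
The plan is to compute the Ricci tensor directly from the semi-isotropic expression for $R$ by contracting against an orthonormal frame, and then identify the resulting endomorphism with a multiple of $\emph{\text{Ric}}_{o}$.

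Concretely, I would begin by choosing a local orthonormal $\pi$-frame $\{\overline{E}_i\}_{i=1}^{n}$, so that for any $\pi$-vector fields $\overline{Y},\overline{Z}$ the Ricci tensor is obtained by tracing the $h$-curvature: $\emph{\text{Ric}}(\overline{Y},\overline{Z})=\sum_{i}R(\overline{Y},\overline{E}_i,\overline{Z},\overline{E}_i)$. Since $(M,L)$ is horizontally integrable, by the cited result from \cite{Types} the Ricci tensor is symmetric, hence the associated endomorphism $\emph{\text{Ric}}_{o}$ is $g$-self-adjoint. This symmetry is what allows me to turn a sum of products of Ricci components into a composition $\emph{\text{Ric}}_{o}\circ\emph{\text{Ric}}_{o}$.

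Next, I would substitute $A=\emph{\text{Ric}}$ into the semi-isotropic formula $R(\overline{X},\overline{Y},\overline{Z},\overline{W})=A(\overline{X},\overline{Z})A(\overline{Y},\overline{W})-A(\overline{X},\overline{W})A(\overline{Y},\overline{Z})$ and contract by setting $\overline{X}=\overline{W}=\overline{E}_i$ and summing. The scalar factor $\sum_i \emph{\text{Ric}}(\overline{E}_i,\overline{E}_i)$ collapses to the scalar curvature $r$, producing a term $r\,\emph{\text{Ric}}(\overline{Y},\overline{Z})$. The other term, $\sum_i \emph{\text{Ric}}(\overline{Y},\overline{E}_i)\,\emph{\text{Ric}}(\overline{E}_i,\overline{Z})$, is precisely the matrix product expressing $g(\emph{\text{Ric}}_{o}^{\,2}\overline{Y},\overline{Z})$ once symmetry is invoked (so that $\emph{\text{Ric}}(\overline{Y},\overline{E}_i)=g(\emph{\text{Ric}}_{o}\overline{Y},\overline{E}_i)$ and the $\overline{E}_i$-sum reassembles $\emph{\text{Ric}}_{o}\circ\emph{\text{Ric}}_{o}$).

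Combining these contractions yields $\emph{\text{Ric}}(\overline{Y},\overline{Z}) = r\,\emph{\text{Ric}}(\overline{Y},\overline{Z}) - g(\emph{\text{Ric}}_{o}^{\,2}\overline{Y},\overline{Z})$, i.e.\ $g(\emph{\text{Ric}}_{o}^{\,2}\overline{Y},\overline{Z}) = (r-1)\,g(\emph{\text{Ric}}_{o}\overline{Y},\overline{Z})$ for all $\overline{Y},\overline{Z}$. Since $g$ is nondegenerate, this gives $\emph{\text{Ric}}_{o}^{\,2} = (r-1)\,\emph{\text{Ric}}_{o}$, which by Definition \ref{def.1a}\textbf{(b)} identifies $(M,L)$ as generalized Ricci with associated scalar $\alpha=r-1$. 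The only subtle point, and the step I would pay the most attention to, is making sure that the contraction convention is consistent with the paper's definitions of $R$ and $\emph{\text{Ric}}$ so that the sign in the final relation comes out as $(r-1)$ rather than $(r+1)$; everything else is a straightforward algebraic bookkeeping that relies essentially on the symmetry of $\emph{\text{Ric}}$ guaranteed by horizontal integrability.
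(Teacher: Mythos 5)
Your proposal is correct and follows essentially the same route as the paper: substitute $A=\emph{\text{Ric}}$ into the semi-isotropic formula, contract over a frame so one term gives $r\,\emph{\text{Ric}}$ and the other gives $g(\emph{\text{Ric}}_{o}^{2}\,\cdot,\cdot)$, and use the symmetry of $\emph{\text{Ric}}$ (from horizontal integrability) to conclude $\emph{\text{Ric}}_{o}^{2}=(r-1)\emph{\text{Ric}}_{o}$. The only blemish is the slot bookkeeping you yourself flag (your stated trace convention uses the second and fourth arguments, while you contract $\overline{X}=\overline{W}=\overline{E}_i$), but with the skew-symmetries of the $h$-curvature this washes out and the final identity, hence $\alpha=r-1$, is unaffected.
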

\begin{proof}
As the Ricci  tensor of a horizontally integrable Finsler manifold is symmetric \cite{Types},  then, we have
\begin{equation*}\label{eq.10}
  R(\overline{X},\overline{Y},\overline{Z},\overline{W})=\emph{\text{Ric}}(\overline{X},\overline{Z})\emph{\text{Ric}}(\overline{Y},\overline{W})
 -\emph{\text{Ric}}(\overline{X},\overline{W})\emph{\text{Ric}}(\overline{Y},\overline{Z}).
\end{equation*}
Contracting both sides of the above equation with respect to $\overline{Y}$ and $\overline{W}$, we obtain
$$\emph{\text{Ric}}(\overline{X},\overline{Z})=r\emph{\text{Ric}}(\overline{X},\overline{Z})-
g(\emph{\text{Ric}}_{o}\overline{X},\emph{\text{Ric}}_{o}\overline{Z}).$$
From which, noting that the Ricci  tensor $\text{Ric}$ is symmetric
$$\emph{\text{Ric}}(\emph{\text{Ric}}_{o}\overline{X},\overline{Z})=(r-1)\emph{\text{Ric}}(\overline{X},\overline{Z}).$$
Hence, $(M,L)$ is generalized Ricci with associated scalar $\alpha=r-1$.
\end{proof}

\begin{rem}
Theorem \ref{thm.3} and Theorem \ref{thm.4} give two classes of generalized Ricci Finsler manifolds.
\end{rem}

%%%%%%%%%%%%%%%%%%%%%%%%%%%%%%%%%%%%%%%%%%%%%%%%%%%%%%%%%%%%% Section 3 %%%%%%%%%%%%%%%%%%%%%%%%%%%%%%%%%%%%%%%%%%%%%%%%%%%%%%%%%%%%%%%%%
\Section{Projective (m-projective) recurrence}

In this section, we investigate some new types of recurrent Finsler spaces, namely the projectively recurrent and m-projectively recurrent Finsler spaces.
Some Finsler tensors  are defined and their properties are studied.  These tensors are used to define the projectively (m-projectively) recurrent Finsler space.

\smallskip
For  a Finsler manifold  of dimension $n\geq3$ with non-zero Ricci tensor
 $\textmd{Ric}$. we define the following tensors:
 \begin{eqnarray}
 % \nonumber to remove numbering (before each equation)
   \mathbb{P}(\overline{X},\overline{Y})\overline{Z} &:=& R(\overline{X},\overline{Y})\overline{Z}-\frac{1}{(n-1)}\{
   \emph{\text{Ric}}(\overline{X},\overline{Z})\overline{Y}-\emph{\text{Ric}}(\overline{Y},\overline{Z})\overline{X}\} , \label{eq.b1}\\
   \mathbb{H}(\overline{X},\overline{Y})\overline{Z} &:=& R(\overline{X},\overline{Y})\overline{Z}-\frac{1}{2(n-1)}\{
   \emph{\text{Ric}}(\overline{X},\overline{Z})\overline{Y}-\emph{\text{Ric}}(\overline{Y},\overline{Z})\overline{X} \nonumber\\
  {\qquad\qquad\quad}  &&  +g(\overline{X},\overline{Z})\emph{\text{Ric}}_{o}\overline{Y}
  -g(\overline{Y},\overline{Z})\emph{\text{Ric}}_{o}\overline{X}\}. \label{eq.b2}
 \end{eqnarray}
The tensor $\mathbb{P}$ (resp. $\mathbb{H}$) is called the projective (resp. m-projective) curvature tensor.

\smallskip

\begin{defn}\label{def.2a} Let $(M,L)$ be a Finsler manifold  of dimension $n\geq3$ with non-zero  Ricci tensor.
 Then, $(M,L)$ is said to be:
 \begin{description}
   \item[(a)]  projectively recurrent if \,$\stackrel{h}\nabla \mathbb{P}=A\otimes \mathbb{P},$

\item[(b)] m-projectively recurrent if \,$\stackrel{h}\nabla \mathbb{H}=A\otimes \mathbb{H},$
  \end{description}
       where $A$ is a non-zero $\pi$-form on $TM$ called the associated form.
   \par 
In particular, if \,$\stackrel{h}\nabla \mathbb{P}=0$ \emph{(}resp. $\stackrel{h}\nabla \mathbb{H}=0$\emph{)} , then $(M,L)$ is
called projectively \emph{(}resp. m-projectively\emph{)} symmetric.

\end{defn}

The following result gives some properties for the m-projective curvature tensor.
\begin{prop}\label{thm.5}  Let $(M,L)$ be a Finsler manifold  of dimension $n\geq3$ with non-zero Ricci  tensor.
 Then, the m-projective curvature  tensor $\mathbb{H}$  has the following properties\,:\vspace{-0.2cm}
\begin{description}
 \item[(a)] $ \mathbb{H}(\overline{X},\overline{Y},\overline{Z}, \overline{W})=
 - \mathbb{H}(\overline{Y},\overline{X},\overline{Z},\overline{W})$,

 \item[(b)]  $ \mathbb{H}(\overline{X},\overline{Y},\overline{Z}, \overline{W})=
 - \mathbb{H}(\overline{X},\overline{Y},\overline{W},\overline{Z})$,

 \item[(c)]  $\mathfrak{S}_{\overline{X},\overline{Y},\overline{Z}}\,
\{\mathbb{H}(\overline{X},
\overline{Y})\overline{Z}\}=\mathfrak{S}_{\overline{X},\overline{Y},\overline{Z}}\,\{T(\widehat{R}(\overline{X},\overline{Y}),\overline{Z})
-\frac{1}{2(n-1)}[ \emph{\text{Ric}}(\overline{X},\overline{Z})\overline{Y}-\emph{\text{Ric}}(\overline{Y},\overline{Z})\overline{X}
   +g(\overline{X},\overline{Z})\emph{\text{Ric}}_{o}\overline{Y}
  -g(\overline{Y},\overline{Z})\emph{\text{Ric}}_{o}\overline{X}]\}$,

\item[(d)] $\mathfrak{S}_{\overline{X},\overline{Y},\overline{Z}}\,
\{(\nabla_{\beta \overline{X}}\mathbb{H})(\overline{Y},
\overline{Z},\overline{W})\}=-\mathfrak{S}_{\overline{X},\overline{Y},\overline{Z}}\,
\{P(\overline{X},\widehat{R}(\overline{Y},\overline{Z}))\overline{W}+\frac{1}{2(n-1)}[
   (\nabla_{\beta \overline{X}}\emph{\text{Ric}})(\overline{Y},\overline{W})\overline{Z}-(\nabla_{\beta \overline{X}}\emph{\text{Ric}})(\overline{Z},\overline{W})\overline{Y}
   +g(\overline{Y},\overline{W})(\nabla_{\beta \overline{X}}\emph{\text{Ric}}_{o})(\overline{Z})
  -g(\overline{Z},\overline{W})(\nabla_{\beta \overline{X}}\emph{\text{Ric}}_{o})(\overline{Y})]\}$,

\item[(e)] $(\nabla_{\gamma \overline{\eta}}\mathbb{H})(\overline{X}, \overline{Y},
\overline{Z})=0$,

\item[(f)] $\mathbb{H}(\overline{X}, \overline{Y})\overline{\zeta}=\frac{1}{2(n-1)}\{g(\overline{Y},\overline{\zeta})\emph{\text{Ric}}_{o}\overline{X}
    -g(\overline{X},\overline{\zeta})\emph{\text{Ric}}_{o}\overline{Y}\}$,
\end{description}
where $\overline{\zeta}$ is a concurrent $\pi$-vector field \cite{r94a}.
\end{prop}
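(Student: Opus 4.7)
The plan is to verify each assertion \textbf{(a)}--\textbf{(f)} by substituting the definition (\ref{eq.b2}) of $\mathbb{H}$ and invoking the corresponding identity for the Cartan $h$-curvature $R$. Throughout, I would split $\mathbb{H}=R-\tfrac{1}{2(n-1)}\Theta$, where $\Theta(\overline{X},\overline{Y})\overline{Z}$ denotes the four-term correction appearing in (\ref{eq.b2}), and handle the two pieces independently.

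For \textbf{(a)} and \textbf{(b)} the argument is purely algebraic: after lowering the vector index by $g$, the Cartan $R$ is skew in its first pair by construction and in its last pair because $\nabla g=0$, while a one-line inspection of $\Theta$ shows the same two skew-symmetries. For \textbf{(c)} I would take the cyclic sum of (\ref{eq.b2}) and substitute the first Bianchi identity of Cartan, which (since $Q=0$) reads
\[
\mathfrak{S}_{\overline{X},\overline{Y},\overline{Z}}\{R(\overline{X},\overline{Y})\overline{Z}\}
=\mathfrak{S}_{\overline{X},\overline{Y},\overline{Z}}\{T(\widehat{R}(\overline{X},\overline{Y}),\overline{Z})\};
\]
the correction piece is left intact and yields the stated right-hand side. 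For \textbf{(d)} I would apply $\nabla_{\beta\overline{X}}$ to (\ref{eq.b2}), cyclically permute $\overline{X},\overline{Y},\overline{Z}$, and invoke the second Bianchi identity
\[
\mathfrak{S}_{\overline{X},\overline{Y},\overline{Z}}\{(\nabla_{\beta\overline{X}}R)(\overline{Y},\overline{Z})\overline{W}\}
=-\mathfrak{S}_{\overline{X},\overline{Y},\overline{Z}}\{P(\overline{X},\widehat{R}(\overline{Y},\overline{Z}))\overline{W}\}.
\]
Because $\nabla g=0$, the metric factors in $\Theta$ are inert under the derivative, so only the $\text{Ric}$ and $\text{Ric}_o$ factors are differentiated, and the four resulting cyclic sums collect into the expression stated in (d).

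For \textbf{(e)} I would use the fact that in the Cartan framework $\nabla_{\gamma\overline{\eta}}R=0$; tracing this and using $\nabla g=0$ yields $\nabla_{\gamma\overline{\eta}}\text{Ric}=0$ and $\nabla_{\gamma\overline{\eta}}\text{Ric}_o=0$, so every summand of (\ref{eq.b2}) is annihilated by $\nabla_{\gamma\overline{\eta}}$. For \textbf{(f)}, a concurrent $\pi$-vector field $\overline{\zeta}$ satisfies $\nabla_{\beta\overline{X}}\overline{\zeta}=-\overline{X}$ (with a corresponding vanishing in the vertical direction); commuting horizontal derivatives then forces $R(\overline{X},\overline{Y})\overline{\zeta}=0$ and, by contraction, $\text{Ric}(\overline{X},\overline{\zeta})=0$. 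Setting $\overline{Z}=\overline{\zeta}$ in (\ref{eq.b2}) kills the $R$-term and both pure-Ricci terms, leaving precisely the claimed formula. The chief obstacle is the bookkeeping in \textbf{(d)}: the four correction pieces must each be $h$-differentiated and summed cyclically, and one must check that the many resulting terms reorganize into the compact expression on the right-hand side with no residue; all the other items reduce in essentially one step to the corresponding property of $R$ together with the metric compatibility of $\nabla$.
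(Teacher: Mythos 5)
Your proposal is correct and follows essentially the same route as the paper, which simply cites the known properties of the Cartan $h$-curvature $R$ (skew-symmetries, first and second Bianchi identities, $\nabla_{\gamma\overline{\eta}}R=0$, from Theorem 3.6 of \cite{r96}) and of concurrent $\pi$-vector fields ($R(\overline{X},\overline{Y})\overline{\zeta}=0$, hence $\text{Ric}(\overline{X},\overline{\zeta})=0$, from Proposition 2.4 of \cite{r94a}) and combines them with the definition (\ref{eq.b2}); you merely spell out the substitution $\mathbb{H}=R-\tfrac{1}{2(n-1)}\Theta$ and the term-by-term bookkeeping that the paper leaves implicit.
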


\begin{proof} The proof follows from Theorem 3.6 of \cite{r96} and Proposition 2.4 of \cite{r94a} together with the definition of m-projective curvature tensor.
 \end{proof}

 \begin{thm}\label{thm.5} For a Finsler manifold with non-zero Ricci tensor
 ${\textmd{Ric}}$ satisfying \linebreak $\emph{\text{Ric}}=\frac{r}{n} \, g$,  the three  notions of being  concircularly recurrent,
   projectively  recurrent and m-projectively recurrent  are equivalent.
\end{thm}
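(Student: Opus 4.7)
The plan is straightforward: under the hypothesis $\emph{\text{Ric}} = \frac{r}{n}\, g$, I would show that both $\mathbb{P}$ and $\mathbb{H}$ collapse to the concircular curvature tensor $C = R - \frac{r}{n(n-1)}\,G$, after which the three recurrence conditions become literally the same equation.

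First I would substitute $\emph{\text{Ric}}(\overline{X},\overline{Z}) = \frac{r}{n}\, g(\overline{X},\overline{Z})$ into the definition \eqref{eq.b1} of $\mathbb{P}$. This Einstein-type condition also gives $\emph{\text{Ric}}_{o} = \frac{r}{n}\,\text{Id}$ (where $\text{Id}$ is the identity on $\pi$-vector fields), because of the defining relation $g(\emph{\text{Ric}}_{o}\overline{X},\overline{Y}) = \emph{\text{Ric}}(\overline{X},\overline{Y})$. Recognizing $g(\overline{X},\overline{Z})\overline{Y} - g(\overline{Y},\overline{Z})\overline{X}$ as $G(\overline{X},\overline{Y})\overline{Z}$ immediately yields $\mathbb{P} = R - \frac{r}{n(n-1)}\,G = C$.

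Next I would perform the same substitution in \eqref{eq.b2}. The four inner terms pair up: the two Ricci-terms contribute $\frac{r}{n}\,G$, and the two $\emph{\text{Ric}}_o$-terms likewise contribute $\frac{r}{n}\,G$. Multiplying by the outer coefficient $\frac{1}{2(n-1)}$ collapses the sum to $\frac{r}{n(n-1)}\,G$, so $\mathbb{H} = R - \frac{r}{n(n-1)}\,G = C$ as well.

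Having identified $\mathbb{P} = \mathbb{H} = C$ as $\pi$-tensors, I would conclude by applying $\stackrel{h}\nabla$ to all three: since equal tensors have equal $h$-covariant derivatives, $\stackrel{h}\nabla\mathbb{P} = \stackrel{h}\nabla\mathbb{H} = \stackrel{h}\nabla C$, and the three recurrence identities $\stackrel{h}\nabla\mathbb{P} = A\otimes\mathbb{P}$, $\stackrel{h}\nabla\mathbb{H} = A\otimes\mathbb{H}$, $\stackrel{h}\nabla C = A\otimes C$ coincide with a single equation. There is essentially no obstacle; the only thing to notice is that $r$ is a function on $TM$ (not a constant), but this is irrelevant because the identification of the three tensors is made before differentiation.
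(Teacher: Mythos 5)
Your proposal is correct and follows the paper's own argument: the paper likewise proves the theorem by observing that under $\text{Ric}=\frac{r}{n}\,g$ the tensors $C$, $\mathbb{P}$ and $\mathbb{H}$ coincide, so the three recurrence conditions become one and the same equation. You have merely written out explicitly (and correctly) the substitution, including $\text{Ric}_{o}=\frac{r}{n}\,\mathrm{Id}$, that the paper leaves to the reader.
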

\begin{proof} The proof follows from the fact that the concircular curvature tensor $C$, the projective curvature tensor $\mathbb{P}$
and the m-projective curvature tensor $\mathbb{H}$  coincide under the given assumption $\emph{\text{Ric}}=\frac{r}{n} \, g$.
 \end{proof}
We know that every recurrent Finsler manifold is Ricci recurrent (Theorem 3.2\textbf{(a)} of \cite{Types}). The converse of this theorem is not true.
For the converse to be true we need an additional assumption as shown in the next result.
\begin{thm} A Ricci recurrent m-projectively recurrent  Finsler manifold with the same recurrence form is recurrent.
\end{thm}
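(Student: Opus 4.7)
The plan is to invert the definition of $\mathbb{H}$ to express the $h$-curvature tensor $R$ in terms of $\mathbb{H}$, $\text{Ric}$, $\text{Ric}_o$ and $g$, then $h$-differentiate and exploit the two recurrence hypotheses together with the fact that $\nabla g=0$ for the Cartan connection.

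Concretely, rearranging \eqref{eq.b2} gives
\begin{equation*}
R(\overline{X},\overline{Y})\overline{Z}=\mathbb{H}(\overline{X},\overline{Y})\overline{Z}+\frac{1}{2(n-1)}\bigl\{\text{Ric}(\overline{X},\overline{Z})\overline{Y}-\text{Ric}(\overline{Y},\overline{Z})\overline{X}+g(\overline{X},\overline{Z})\text{Ric}_{o}\overline{Y}-g(\overline{Y},\overline{Z})\text{Ric}_{o}\overline{X}\bigr\}.
\end{equation*}
First I would establish the auxiliary identity $(\nabla_{\beta\overline{X}}\text{Ric}_{o})\overline{Y}=A(\overline{X})\,\text{Ric}_{o}\overline{Y}$. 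This drops out of the definition $g(\text{Ric}_{o}\overline{Y},\overline{Z})=\text{Ric}(\overline{Y},\overline{Z})$ by $h$-differentiating both sides and using $\nabla g=0$: the left-hand side becomes $g((\nabla_{\beta\overline{X}}\text{Ric}_{o})\overline{Y},\overline{Z})$ while the right-hand side, by Ricci recurrence, becomes $A(\overline{X})\,g(\text{Ric}_{o}\overline{Y},\overline{Z})$, so non-degeneracy of $g$ finishes this step.

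Next I would apply $\nabla_{\beta\overline{X}}$ to the displayed formula for $R$. Again using $\nabla g=0$, every term on the right carrying $g$ is untouched, every term carrying $\text{Ric}$ picks up the factor $A(\overline{X})$ from Ricci recurrence, each $\text{Ric}_{o}$ term picks up the same factor $A(\overline{X})$ by the auxiliary identity, and $\nabla_{\beta\overline{X}}\mathbb{H}$ equals $A(\overline{X})\,\mathbb{H}$ by m-projective recurrence (with the \emph{same} associated form $A$ — this is the hypothesis actually used). Collecting,
\begin{equation*}
(\nabla_{\beta\overline{X}}R)(\overline{Y},\overline{Z},\overline{W})=A(\overline{X})\Bigl\{\mathbb{H}(\overline{Y},\overline{Z})\overline{W}+\frac{1}{2(n-1)}\bigl[\cdots\bigr]\Bigr\}=A(\overline{X})\,R(\overline{Y},\overline{Z})\overline{W},
\end{equation*}
which is exactly the recurrence of $(M,L)$.

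There is no real obstacle; the proof is a direct bookkeeping calculation once the auxiliary identity for $\nabla\text{Ric}_{o}$ is in hand. The only place where care is needed is to be sure that the assumption "same recurrence form" is used to match the scalar factor coming from $\nabla\mathbb{H}$ with the one coming from $\nabla\text{Ric}$ (and $\nabla\text{Ric}_{o}$); without this, the two contributions would not combine into a single $A(\overline{X})R$, and the conclusion would fail.
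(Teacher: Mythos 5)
Your proof is correct and follows essentially the same route as the paper: both differentiate the defining relation between $\mathbb{H}$ and $R$ (you solve for $R$ first, the paper solves for $\nabla_{\beta\overline{W}}R$ afterwards), use $\nabla g=0$ and Ricci recurrence, and then invoke m-projective recurrence with the same form $A$ to conclude $\nabla_{\beta\overline{W}}R=A(\overline{W})R$. Your explicit auxiliary identity $(\nabla_{\beta\overline{X}}\mathrm{Ric}_{o})\overline{Y}=A(\overline{X})\,\mathrm{Ric}_{o}\overline{Y}$ is exactly the step the paper uses implicitly when passing from its differentiated formula to equation (3.6), so nothing is missing.
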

\begin{proof} As $(M,L)$ is Ricci recurrent with recurrence form $A$, then, we have \cite{Types}
\begin{equation}\label{eq.b3}
  (\nabla_{\beta \overline{W}} \emph{\text{Ric}})(\overline{X},\overline{Y})= A(\overline{W}) \emph{\text{Ric}}(\overline{X},\overline{Y}).
\end{equation}
Applying the h-covariant derivative on both sides of (\ref{eq.b2}), noting that $\nabla g=0$, we get
 \begin{eqnarray*}
 % \nonumber to remove numbering (before each equation)
      (\nabla_{\beta \overline{W}}\mathbb{H})(\overline{X},\overline{Y},\overline{Z}) &=& (\nabla_{\beta \overline{W}}R)(\overline{X},\overline{Y},\overline{Z})-\frac{1}{2(n-1)}\{
   (\nabla_{\beta \overline{W}}\emph{\text{Ric}})(\overline{X},\overline{Z})\overline{Y}   \nonumber\\
    && -(\nabla_{\beta \overline{W}}\emph{\text{Ric}})(\overline{Y},\overline{Z})\overline{X}  +g(\overline{X},\overline{Z})(\nabla_{\beta \overline{W}}\emph{\text{Ric}}_{o})(\overline{Y})\nonumber\\
    && -g(\overline{Y},\overline{Z})(\nabla_{\beta \overline{W}}\emph{\text{Ric}}_{o})(\overline{X})\},
 \end{eqnarray*}
In view of (\ref{eq.b3}), this equation becomes
\begin{eqnarray}
 % \nonumber to remove numbering (before each equation)
      (\nabla_{\beta \overline{W}}\mathbb{H})(\overline{X},\overline{Y},\overline{Z}) &=& (\nabla_{\beta \overline{W}}R)(\overline{X},\overline{Y},\overline{Z})-\frac{A(\overline{W})}{2(n-1)}\{
   \emph{\text{Ric}}(\overline{X},\overline{Z})\overline{Y}-\emph{\text{Ric}}(\overline{Y},\overline{Z})\overline{X} \nonumber\\
  {\qquad\qquad\quad}  &&  +g(\overline{X},\overline{Z})\emph{\text{Ric}}_{o}\overline{Y}
  -g(\overline{Y},\overline{Z})\emph{\text{Ric}}_{o}\overline{X}\}. \label{eq.b4}
 \end{eqnarray}

Now, let  $(M,L)$ be an m-projectively recurrent manifold with the same recurrence form $A$.
 Then, from Definition \ref{def.2a} and  (\ref{eq.b4}), we obtain
$$(\nabla_{\beta \overline{W}}R)(\overline{X},\overline{Y},\overline{Z}) = A(\overline{W})R(\overline{X},\overline{Y})\overline{Z}.$$
Hence, $(M,L)$ is recurrent with the same recurrence form $A$.
\end{proof}

\begin{thm} Each recurrent Finsler manifold is  m-projectively recurrent.
\end{thm}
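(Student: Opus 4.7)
The plan is to piggyback on the computation already performed in the preceding theorem, namely equation (\ref{eq.b4}), which expresses $(\nabla_{\beta\overline{W}}\mathbb{H})(\overline{X},\overline{Y},\overline{Z})$ as $(\nabla_{\beta\overline{W}}R)(\overline{X},\overline{Y},\overline{Z})$ minus an $A(\overline{W})$-multiple of the bracketed combination appearing in the definition (\ref{eq.b2}) of $\mathbb{H}$. The key observation is that that derivation only used Ricci recurrence $\nabla_{\beta\overline{W}}\text{Ric} = A(\overline{W})\,\text{Ric}$ (together with $\nabla g = 0$, which gives the analogous transport law for $\text{Ric}_o$), and not full recurrence of $R$; so (\ref{eq.b4}) is available to us for free once we know the manifold is Ricci recurrent.

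First, I would assume $(M,L)$ is recurrent with recurrence form $A$, i.e., $\nabla_{\beta\overline{W}} R = A(\overline{W})\,R$. By the cited Theorem 3.2\textbf{(a)} of \cite{Types}, this already forces $(M,L)$ to be Ricci recurrent with the \emph{same} form $A$, so the hypotheses underlying (\ref{eq.b4}) hold.

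Next, I would substitute the recurrence identity $(\nabla_{\beta\overline{W}} R)(\overline{X},\overline{Y},\overline{Z}) = A(\overline{W})\,R(\overline{X},\overline{Y})\overline{Z}$ into the right-hand side of (\ref{eq.b4}). The result is
\[
(\nabla_{\beta\overline{W}}\mathbb{H})(\overline{X},\overline{Y},\overline{Z}) = A(\overline{W})\Bigl\{R(\overline{X},\overline{Y})\overline{Z} - \frac{1}{2(n-1)}\bigl[\ldots\bigr]\Bigr\},
\]
where the bracketed expression is precisely the one subtracted from $R$ in the definition (\ref{eq.b2}) of $\mathbb{H}$. Recognizing this bracket as $\mathbb{H}(\overline{X},\overline{Y})\overline{Z}$ itself finishes the proof, since we obtain $\nabla_{\beta\overline{W}}\mathbb{H} = A(\overline{W})\,\mathbb{H}$, which is m-projective recurrence with the same associated form $A$.

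There is essentially no obstacle: the heavy lifting — computing $\nabla\mathbb{H}$ and eliminating the $\text{Ric}$-derivative terms via Ricci recurrence — has already been carried out in the previous theorem and codified in (\ref{eq.b4}). The only thing to check is that (\ref{eq.b4}) is genuinely applicable under the weaker hypothesis of plain recurrence, which is immediate from the fact that recurrence implies Ricci recurrence with the same form. Thus the proof reduces to a one-line substitution.
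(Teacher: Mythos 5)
Your argument is correct and is essentially identical to the paper's own proof: both pass from recurrence to Ricci recurrence with the same form, invoke (\ref{eq.b4}) (which indeed only requires Ricci recurrence), substitute $\nabla_{\beta\overline{W}}R = A(\overline{W})R$, and recognize the result as $A(\overline{W})\,\mathbb{H}$ via (\ref{eq.b2}). No differences worth noting.
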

\begin{proof}
Since  $(M,L)$ is recurrent with  recurrence form $A$,
then $(M,L)$ is Ricci recurrent with the same recirrence form $A$.
Using  Definition 2.1\textbf{(a)} of \cite{Types}, taking into account (\ref{eq.b4}) and  (\ref{eq.b2}), we conclude that
$$(\nabla_{\beta \overline{W}}\mathbb{H})(\overline{X},\overline{Y},\overline{Z}) = A(\overline{W})\mathbb{H}(\overline{X},\overline{Y})\overline{Z}.$$
Hence, $(M,L)$ is  m-projectively recurrent with the same recurrence form $A$.
 \end{proof}

\begin{thm} Let $(M,L)$ be a Ricci recurrent Finsler manifold. Then, $(M,L)$
 is m-projectively recurrent if and only if it is projectively recurrent with the same recurrence form.
\end{thm}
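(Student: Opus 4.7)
The plan is to exploit the algebraic relationship between the projective tensor $\mathbb{P}$ and the m-projective tensor $\mathbb{H}$, and show that under Ricci recurrence, their difference satisfies the recurrence relation with the same form, so that one tensor is recurrent with form $A$ if and only if the other is.

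First I would compute $\mathbb{H}-\mathbb{P}$ directly from the definitions (\ref{eq.b1}) and (\ref{eq.b2}). Writing $\Phi(\overline{X},\overline{Y})\overline{Z} := \text{Ric}(\overline{X},\overline{Z})\overline{Y}-\text{Ric}(\overline{Y},\overline{Z})\overline{X}$ and $\Psi(\overline{X},\overline{Y})\overline{Z} := g(\overline{X},\overline{Z})\text{Ric}_{o}\overline{Y}-g(\overline{Y},\overline{Z})\text{Ric}_{o}\overline{X}$, a short calculation gives
\begin{equation*}
\mathbb{H}-\mathbb{P}=\frac{1}{2(n-1)}\bigl(\Phi-\Psi\bigr).
\end{equation*}

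Next I would observe that under the Ricci recurrence hypothesis $\nabla_{\beta\overline{W}}\text{Ric}=A(\overline{W})\,\text{Ric}$, the metric compatibility $\nabla g=0$ of the Cartan connection forces the same recurrence on $\text{Ric}_o$: indeed, from $g(\text{Ric}_{o}\overline{X},\overline{Y})=\text{Ric}(\overline{X},\overline{Y})$ and the parallelism of $g$, one gets $(\nabla_{\beta\overline{W}}\text{Ric}_{o})\overline{X}=A(\overline{W})\,\text{Ric}_{o}\overline{X}$. Since $g$ is parallel as well, applying $\nabla_{\beta\overline{W}}$ to both $\Phi$ and $\Psi$ yields
\begin{equation*}
\nabla_{\beta\overline{W}}\Phi=A(\overline{W})\,\Phi,\qquad \nabla_{\beta\overline{W}}\Psi=A(\overline{W})\,\Psi,
\end{equation*}
and consequently
\begin{equation*}
\nabla_{\beta\overline{W}}(\mathbb{H}-\mathbb{P})=A(\overline{W})\,(\mathbb{H}-\mathbb{P}).
\end{equation*}

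The conclusion is then immediate from linearity: if $\nabla\mathbb{H}=A\otimes\mathbb{H}$, subtracting $\nabla(\mathbb{H}-\mathbb{P})=A\otimes(\mathbb{H}-\mathbb{P})$ gives $\nabla\mathbb{P}=A\otimes\mathbb{P}$; conversely, adding the two equations recovers $\nabla\mathbb{H}=A\otimes\mathbb{H}$ from $\nabla\mathbb{P}=A\otimes\mathbb{P}$. This establishes the equivalence with the same recurrence form.

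The only nontrivial point I anticipate is the step transferring Ricci recurrence from $\text{Ric}$ to $\text{Ric}_o$; this is where the metric property of the Cartan connection enters essentially. Once that is in hand, the proof reduces to elementary linear algebra on the tensor identity $\mathbb{H}=\mathbb{P}+\frac{1}{2(n-1)}(\Phi-\Psi)$.
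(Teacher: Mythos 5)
Your proposal is correct and follows essentially the same route as the paper: both rest on the identity $\mathbb{H}-\mathbb{P}=\frac{1}{2(n-1)}\bigl(\Phi-\Psi\bigr)$, differentiate it h-covariantly, and use Ricci recurrence (together with $\nabla g=0$) to see that this difference is itself recurrent with form $A$, so the two recurrence conditions are interchangeable. The only difference is cosmetic: you make explicit the transfer of recurrence from $\text{Ric}$ to $\text{Ric}_{o}$ via metric compatibility, a step the paper uses implicitly when replacing $(\nabla_{\beta\overline{W}}\text{Ric}_{o})(\overline{X})$ by $A(\overline{W})\,\text{Ric}_{o}\overline{X}$.
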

\begin{proof} From (\ref{eq.b1}) and (\ref{eq.b2}), we have
\begin{eqnarray}
% \nonumber to remove numbering (before each equation)
  \mathbb{H}(\overline{X},\overline{Y})\overline{Z} &:=& \mathbb{P}(\overline{X},\overline{Y})\overline{Z}+\frac{1}{2(n-1)}\{
   \emph{\text{Ric}}(\overline{X},\overline{Z})\overline{Y}-\emph{\text{Ric}}(\overline{Y},\overline{Z})\overline{X} \nonumber\\
    &&  +g(\overline{Y},\overline{Z})\emph{\text{Ric}}_{o}\overline{X}
  -g(\overline{X},\overline{Z})\emph{\text{Ric}}_{o}\overline{Y}\}, \label{eq.bb2}
\end{eqnarray}
From which, taking the h-covariant derivative of both sides, we obtain
 \begin{eqnarray*}
 % \nonumber to remove numbering (before each equation)
      (\nabla_{\beta \overline{W}}\mathbb{H})(\overline{X},\overline{Y},\overline{Z}) &=& (\nabla_{\beta \overline{W}}\mathbb{P})(\overline{X},\overline{Y},\overline{Z})+\frac{1}{2(n-1)}\{
   (\nabla_{\beta \overline{W}}\emph{\text{Ric}})(\overline{X},\overline{Z})\overline{Y}   \nonumber\\
    && -(\nabla_{\beta \overline{W}}\emph{\text{Ric}})(\overline{Y},\overline{Z})\overline{X}  +g(\overline{Y},\overline{Z})(\nabla_{\beta \overline{W}}\emph{\text{Ric}}_{o})(\overline{X})\nonumber\\
    && -g(\overline{X},\overline{Z})(\nabla_{\beta \overline{W}}\emph{\text{Ric}}_{o})(\overline{Y})\},
 \end{eqnarray*}
Since, $(M,L)$ is Ricci recurrent with recurrence form $A$, then by (\ref{eq.b3}), the above equation takes the form
\begin{eqnarray}
 % \nonumber to remove numbering (before each equation)
      (\nabla_{\beta \overline{W}}\mathbb{H})(\overline{X},\overline{Y},\overline{Z}) &=& (\nabla_{\beta \overline{W}}\mathbb{P})(\overline{X},\overline{Y},\overline{Z})+\frac{A(\overline{W})}{2(n-1)}\{
   \emph{\text{Ric}}(\overline{X},\overline{Z})\overline{Y}-\emph{\text{Ric}}(\overline{Y},\overline{Z})\overline{X} \nonumber\\
  {\qquad\qquad\quad}  &&  +g(\overline{Y},\overline{Z})\emph{\text{Ric}}_{o}\overline{X}
  -g(\overline{X},\overline{Z})\emph{\text{Ric}}_{o}\overline{Y}\}, \label{eq.bb4}
 \end{eqnarray}

Now, let  $(M,L)$ be m-projectively recurrent with the same recurrence form $A$. Then, from Definition \ref{def.2a},
 taking into account (\ref{eq.bb2}), the above equation reduces to
$$(\nabla_{\beta \overline{W}}\mathbb{P})(\overline{X},\overline{Y},\overline{Z}) = A(\overline{W})\mathbb{P}(\overline{X},\overline{Y})\overline{Z}.$$
Hence, $(M,L)$ is projectively recurrent with the same recurrence form $A$.
\par
Conversely, let  $(M,L)$ be projectively recurrent with the same recurrence form $A$. Then, from Definition \ref{def.2a}, taking into account  (\ref{eq.bb4}) and  (\ref{eq.bb2}), we conclude that $(M,L)$ is  m-projectively recurrent with the same recurrence form $A$.
 \end{proof}

%%%%%%%%%%%%%%%%%%%%%%%%%%%%%%%%%%%%%%%%%%%%%%%%%%%% Refrences %%%%%%%%%%%%%%%%%%%%%%%%%%%%%%%%%%%%%%%%%%%%%%%%%

\providecommand{\bysame}{\leavevmode\hbox
to3em{\hrulefill}\thinspace}
\providecommand{\MR}{\relax\ifhmode\unskip\space\fi MR }
% \MRhref is called by the amsart/book/proc definition of \MR.
\providecommand{\MRhref}[2]{%
  \href{http://www.ams.org/mathscinet-getitem?mr=#1}{#2}
} \providecommand{\href}[2]{#2}


\begin{thebibliography}{21}

\bibitem{r58}
H.~Akbar-Zadeh, \emph{Initiation to global Finsler geometry},
Elsevier, 2006.

%\bibitem{R1}
%U. C. De and  N. Guha, \emph{On generalized recurrent manifold}, J. National Academy
%of Math. India, \textbf{9} (1991), 85-92.

\bibitem{R3}
U. C. De, N. Guha and D. Kamilya, \emph{On generalized Ricci-recurrent manifolds},
Tensor, N. S., \textbf{56} (1995), 312-317.

\bibitem{R4}
Y. B. Maralabhavi and M. Rathnamma, \emph{On generalized recurrent manifold},
Indian J. Pure Appl. Math., \textbf{30} (1999), 1167-1171.

\bibitem{F3}
M.~Matsumoto, \emph{On $h$-isotropic and $\textsc{C}^{h}$-recurrent
Finsler spaces}, J. Math. Kyoto Univ., \textbf{11} (1971), 1-9.

\bibitem{F2}
R. S. Mishra and H. D. Pande, \emph{Recurrent Finsler spaces}, J. Ind. Math. Soc., \textbf{32} (1968), 17-22.

\bibitem{Ra1}
R. H. Ojha, \emph{m-projectvely flat Saskian manifold}, Indian J. Pure Appl. Math.,  \textbf{4} (1985), 481-
484.

\bibitem{R2}
E. M. Patterson, \emph{Some theorems on Ricci recurrent spaces}, J. London Math. Soc.,
\textbf{27} (1952), 287-295.

%\bibitem{r44a}
%A.~A. Tamim, \emph{Special Finsler manifolds}, J. Egypt. Math. Soc.,
%\textbf{10}, \textbf{2} (2002), 149-177.
\bibitem{R3aa}
S. K. Saha, \emph{On Type of Riemannian manifold}, Bull. Cal. Math. Soc., \textbf{101} (2009), 553-558

\bibitem{R3a}
J. P. Singh,  \emph{On an Einstein m-projectve P-Sasakian amnifolds}, Bull. Cal. Math. Soc.,
\textbf{101} (2009), 175-180.

\bibitem{R5}
H. Singh and Q. Khan, \emph{On generalized recurrent Riemannian manifolds}, Publ.
Math. Debrecen, \textbf{56} (2000), 87-95.

\bibitem{R1}
A. G. Walker, \emph{On Ruses's spaces of recurrent curvature}, Proc.
London Math. Soc.,  \textbf{52} (1950), 36-64.

\bibitem{F1}
Nabil~L. Youssef and A.~Soleiman, \emph{On concircularly recurrent  Finsler manifolds},
   Balkan J. Geom. Appl., \textbf{18} (2013), 101-113.
    arXiv: 0704.0053 [math. DG].

\bibitem{Types}
\bysame, \emph{Some types of recurrence in Finsler geometry}, submitted. arXiv: 1607.07468v2 [math.DG].

\bibitem{r86}
Nabil~L. Youssef, S.~H. Abed and A.~Soleiman, \emph{A global
approach to the theory of special Finsler manifolds},
   J. Math. Kyoto Univ., \textbf{48} (2008), 857-893.
    arXiv: 0704.0053 [math. DG].

 \bibitem{r62}
\bysame, \emph{A global theory of conformal {F}insler geometry},
Tensor, N. S., {\textbf{69}} (2008), 155--178. arXiv: 0610052 [math. DG].

\bibitem{r92}
\bysame, \emph{Cartan and
  Berwald connections in the pullback formalism}, Algebras,
  Groups and Geometries, \textbf{25} (2008), 363--386. arXiv: 0707.1320 [math. DG].

\bibitem{r94}
\bysame, \emph{A global approach to  the theory  of connections in
Finsler geometry}, \linebreak Tensor, N. S., \textbf{71} (2009),
187-208. arXiv: 0801.3220 [math.DG].

\bibitem{r94a}
\bysame, \emph{Concurrent $\pi$-vector fields and eneregy
$\beta$-change}, Int. J. Geom. Meth. Mod. Phys., \textbf{6} (2009), 1003-1031. arXiv: 0805.2599v2 [math.DG].

\bibitem{r96}
\bysame, \emph{Geometric objects associated with the fundumental
connections in Finsler geometry},  J. Egypt. Math. Soc.,
\textbf{18} (2010), 67-90. arXiv: 0805.2489 [math.DG].

\end{thebibliography}
\end{document}